\def\Fcal{{\mathcal{F}}}
\def\Vcal{{\mathcal{V}}}
\def\ind{\indicator}
\def\E{\mathbb{E}}
\def\P{\mathbb{P}}
\def\R{\mathbb{R}}
\def\X{\mathfrak{X}}
\def\d{\mathrm{d}}
\newcommand{\indicator}[1]{\mathbbm{1}_{\{#1\}}}
\theoremstyle{plain}
\newtheorem{Thm}{Theorem}
\newtheorem{Lem}{Lemma}[section]
\newtheorem{corollary}[Thm]{Corollary}
\theoremstyle{remark}
\title[Tightness for processes with fixed points of discontinuities]{Tightness for processes with fixed points of discontinuities and applications in varying environment}
\author[V.\ Bansaye]{Vincent Bansaye$^1$}
\email{vincent.bansaye@polytechnique.edu}
\address{$^1$CMAP, Ecole Polytechnique, Route de Saclay\\
91128 Palaiseau Cedex, France.}
\author[T.\ Kurtz]{Thomas G.\ Kurtz$^2$}
\email{kurtz@math.wisc.edu}
\address{$^2$Department of Mathematics, University of Wisconsin-Madison, 480 Lincoln Drive, Madison, WI 53706-1388.}
\author[F.\ Simatos]{Florian Simatos$^3$}
\email{florian.simatos@isae.fr}
\address{$^3$ISAE Supaero, D\'epartement DISC, 10 avenue Edouard Belin, BP 54032, 31055 Toulouse Cedex 4, France}
\thanks{This work was funded by project MANEGE 
``Mod\`eles Al\'eatoires en \'Ecologie, G\'en\'etique et \'Evolution'' 
09-BLAN-0215 of ANR (French national research agency), 
Chair Mod\'elisation Math\'ematique et Biodiversit\'e 
VEOLIA-Ecole Polytechnique-MNHN-F.X.  the 
professoral chair Jean Marjoulet, and National Science 
Foundation Grant, DMS 11-06424.  While most of it was 
carried out, F.  Simatos was affiliated with Inria.} 
\date{\today}
\begin{document}

\begin{abstract}
We establish a sufficient condition for the tightness of a 
sequence of stochastic processes.  Our condition makes 
it possible to study processes with accumulations of 
fixed times of discontinuity.  Our motivation comes from 
the study of processes in varying or random 
environment.  We demonstrate the usefulness of our 
condition on two examples:  Galton Watson branching 
processes in varying environment and logistic branching 
processes with catastrophes.  
\end{abstract}

\maketitle

\section{Main result: statement and discussion}

\subsection*{Statement} Let $(\X,d)$ be a separable, 
complete metric space and $D_{\X}$ be the space of c\`adl\`ag 
functions $f:[0,\infty )\to\X$.  The space $D_{\X}$ is endowed with 
the Skorohod $J_1$ topology, and we write $f_n\to f$ for 
convergence in this space and $X_n \Rightarrow X$ for the corresponding weak convergence of stochastic processes. See, for instance, 
Billingsley~\cite{Billingsley99:0} for more details.  For 
$f\in D_{\X}$ and $t\geq 0$ we write $f(t-)=\lim_{s\uparrow t}f(s
)$ (with the 
convention $f(t-)=f(0)$ if $t=0$) and $\Delta f(t)=d(f(t),f(t-))$.  
The above definitions and notation apply to the case 
$\X=\R$ and $d$ is the Euclidean distance, in which case we 
denote by $\Vcal$ the set of c\`adl\`ag functions $f\in D_{\R}$ which 
are non-decreasing.  

For each $n\geq 1$, we consider a c\`adl\`ag process 
$X_n=(X_n(t),t\geq 0)$ adapted to a filtration $\{\Fcal^n_t,t\geq 
0\}$.  

\begin{Thm}\ 
\label{thm} Assume that:  
\begin{enumerate}[label=\textnormal{A\arabic*)}, 
ref=\textnormal{A\arabic*}] 
\item\label{ass:compact-set} For each $T,\varepsilon >0$, there exists 
a compact set $K$ of $\X$ such that 
\begin{equation}\label{eq:compact-set}\liminf_{n\to\infty}\ \P\left
(X_n(t)\in K,\forall t\leq T\right)\geq 1-\varepsilon .\end{equation}
\item\label{ass:oscillations} There exist stochastic 
processes $F_n,F\in\Vcal$ such that $\sigma (F_n)\subset\Fcal^n_0$ and 
$F_n\Rightarrow F$ and
$\beta >0$ such that for every $n\geq 1$ and 
every $0\leq s\leq t$, 

\begin{equation}\label{eq:oscillations}\E\left[1\wedge d\left(X_n
(t),X_n(s)\right)^{\beta}\mid\Fcal^n_s\right]\leq F_n(t)-F_n(s).\end{equation}
\end{enumerate}

Then the sequence $(X_n,n\geq 1)$ is tight in $D_{\X}$.
\end{Thm}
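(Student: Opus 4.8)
The plan is to verify the classical tightness criterion in $D_\X$ (Billingsley~\cite{Billingsley99:0}): that the laws of the initial values $X^n(0)$ are tight, that a compact containment condition holds on $[0,T]$, and that for each $\eta>0$ the Skorokhod modulus
\[
w'(X^n,\delta,T)=\inf_{\{t_i\}}\ \max_i\ \sup_{t_{i-1}\le s<t<t_i} d\big(X^n(s),X^n(t)\big),
\]
where the infimum runs over partitions $0=t_0<\dots<t_m=T$ with $\min_i(t_i-t_{i-1})>\delta$, satisfies $\lim_{\delta\to0}\limsup_{n}\P(w'(X^n,\delta,T)\ge\eta)=0$. Assumption~\ref{ass:compact-set} immediately yields both the tightness of $X^n(0)$ (since $\{X^n(0)\notin K\}\subseteq\{T^n_K\le T\}$) and compact containment on $[0,T]$, after handling the finitely many small $n$ separately using that each individual c\`adl\`ag process is tight. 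The whole difficulty therefore lies in the modulus.

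Fix $\eta,\varepsilon>0$ and use \ref{ass:compact-set} to pick a compact $K$ with $\limsup_n\P(T^n_K\le T)\le\varepsilon$; I work on the event $\{T^n_K>T\}$, on which $X^n$ stays in $K$ throughout $[0,T]$ and \ref{ass:oscillations} applies with the associated functions $F^n\to F$. The first step is a maximal inequality: on $\{T^n_K>T\}$ and for $0\le s\le t\le T$,
\[
\P\Big(\sup_{s\le u\le t}\big(1\wedge d(X^n(s),X^n(u))\big)>\eta\ \Big|\ \mathcal{F}_s\Big)\ \le\ \frac{2\,(F^n(t)-F^n(s))}{1\wedge\eta^2},
\]
valid once the right-hand side is at most $1/2$. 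I would obtain this by an Etemadi-type argument: let $\tau$ be the first time in $[s,t]$ at which $1\wedge d(X^n(s),X^n(\cdot))$ exceeds $\eta$, split $\{\tau\le t\}$ according to whether $d(X^n(s),X^n(t))$ lies above or below $\eta/2$, and on $\{\tau\le t\}$ apply \ref{ass:oscillations} at the stopping time $\tau$ via the time-inhomogeneous strong Markov property, using that $X^n(\tau)\in K$ and that $F^n$ is non-decreasing.

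The central step turns this into control of $w'$ by an oscillation-counting argument. Define $\tau_0=0$ and $\tau_{j+1}=\inf\{u>\tau_j:\ d(X^n(u),X^n(\tau_j))>\eta\}$, the successive times at which $X^n$ oscillates by more than $\eta$; between consecutive $\tau_j$'s the oscillation of $X^n$ is at most $2\eta$, so $\{\tau_j\}$ is a candidate partition for $w'$ provided its gaps exceed $\delta$. Two facts make it admissible with high probability. First, telescoping \ref{ass:oscillations} along the $\tau_j$ (again through the strong Markov property) bounds the expected number of oscillations before $T$ by $(F^n(T)-F^n(0))/(1\wedge\eta^2)$, so uniformly in $n$ there are at most finitely many with probability $\ge1-\varepsilon$. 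Second, since $F$ is non-decreasing it has only finitely many jumps $s_1<\dots<s_k$ of size $\ge\gamma$; for $\delta<\tfrac12\min_i(s_{i+1}-s_i)$ I insert a partition point at each $s_i$, absorbing the corresponding fixed discontinuity. Away from these points the increments of $F$ over windows of length $\delta$ are at most $\gamma+o(1)$ as $\delta\to0$, so by the maximal inequality and $F^n\to F$ the probability that two oscillation times fall within $\delta$ of one another while avoiding the $s_i$ is bounded by a constant multiple of $(F(T)-F(0))\,\gamma/(1\wedge\eta^2)^2$. Choosing $\gamma$ small forces every close pair of oscillations to sit next to some $s_i$, so the realized partition can be thinned to gaps $>\delta$ while keeping oscillation $\le2\eta$ on each piece; hence $\limsup_n\P(w'(X^n,\delta,T)\ge2\eta)\le C\varepsilon$ for all small $\delta$, which is the desired estimate.

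The main obstacle is precisely this clustering estimate, and it is where the hypothesis does its work. With only the second-moment-type bound \ref{ass:oscillations} one cannot rule out $\eta$-oscillations by a crude union bound over a deterministic partition, since those bounds would sum to a quantity of order $F(T)-F(0)$ rather than to something small; the argument must instead count oscillations, separate the finitely many genuine fixed discontinuities carried by the jumps of $F$, and show that the remaining oscillations cannot accumulate because the residual increments of $F$ over shrinking windows are uniformly small. Making this rigorous requires transferring the jump/continuity structure of $F$ to the $F^n$ through the convergence $F^n\to F$ and controlling \ref{ass:oscillations} at the random times $\tau_j$ via the strong Markov property uniformly in $n$; these two transfers are the delicate points, whereas the reductions of the first paragraph and the maximal inequality are routine.
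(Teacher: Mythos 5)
Your overall skeleton (reduction to the Skorokhod modulus, an Etemadi/Ottaviani-type maximal inequality obtained from \eqref{eq:oscillations} via the strong Markov property, partition points at the big jumps of $F$, oscillation counting in between) is sound and genuinely parallel to the paper's proof, and your maximal inequality itself is correct: its internal stopping time only needs \eqref{eq:oscillations} with a \emph{deterministic} endpoint, which strong Markov provides. The genuine gap is the counting bound, which you justify by ``telescoping \ref{ass:oscillations} along the $\tau_j$ (again through the strong Markov property).'' This is not a valid use of \eqref{eq:oscillations}: it controls $\E\left[1\wedge d(X^n(s),X^n(t))^2\right]$ only for two deterministic times, and the strong Markov property at $\tau_j$ reduces your telescoping term $\E\left[1\wedge d(X^n(\tau_j),X^n(\tau_{j+1}))^2\mid\mathcal{F}_{\tau_j}\right]$ to a displacement evaluated at the \emph{random} time $\tau_{j+1}$ of the restarted process, which \eqref{eq:oscillations} does not control. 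There is no free ``optional'' version of \eqref{eq:oscillations}: the natural supermartingale route fails because \eqref{eq:oscillations} bounds the distance to the current position, not to the pinned point $X^n(\tau_j)$, and the cross term $2\,d(X^n(\tau_j),X^n(s))\left(F^n(t)-F^n(s)\right)^{1/2}$ arising from the triangle inequality is not $O\left(F^n(t)-F^n(s)\right)$. Converting the deterministic-time hypothesis into a bound at pairs of stopping times is precisely the crux of the whole theorem; it is what the paper's Proposition~\ref{prop:bound-M} (an adaptation of Kallenberg's Lemma~16.12, via an integration and Cauchy--Schwarz trick) accomplishes. Your clustering estimate inherits the same gap, since its stated bound $(F(T)-F(0))\gamma/(1\wedge\eta^2)^2$ is exactly (expected number of oscillations)$\times$(per-oscillation clustering probability) and thus presupposes the counting bound. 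The count \emph{can} be rescued from your own maximal inequality --- given $\mathcal{F}_{\tau_j}$, either $F^n(\tau_j+\cdot)-F^n(\tau_j)$ reaches $c\,(1\wedge\eta^2)$ before $\tau_{j+1}$, so that each such oscillation consumes a fixed amount of $F^n$-mass from disjoint intervals, or the next oscillation occurs before that with conditional probability at most $1/2$ --- but this argument (or Proposition~\ref{prop:bound-M}) is the heart of the proof and is missing, not routine.

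The second gap is the bookkeeping of fixed discontinuities. Since $F^n\to F$ only in the Skorokhod sense, the big jumps of $F^n$ --- hence the fixed times of discontinuity of $X^n$ --- sit at times $s_i^n$ that converge to, but in general differ from, the jump times $s_i$ of $F$. A window $[\tau_j,\tau_j+\delta]$ can avoid every $s_i$ and still contain $s_i^n$, in which case its $F^n$-increment is not small and your clustering estimate is false as stated; likewise a partition point placed at $s_i$ does not absorb the jump of $X^n$ at $s_i^n$, so the resulting cell has oscillation comparable to that jump. Both the exceptional windows and the inserted partition points must be taken at $n$-dependent times converging to the $s_i$: this is exactly the construction of the paper's Lemma~\ref{lemma:conv-b^n}, and the reason its estimates \eqref{eq:conv-b^n-2} and \eqref{eq:conv-b^n} are phrased for the stopped functions $F^n_\ell$ along the $n$-dependent subdivision $\b^n$. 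You flag this transfer as ``delicate,'' but it is not optional fine print: without it the argument as written does not go through for any finite $n$.
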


One easily checks, for instance by going back to the 
Arzel\`a--Ascoli characterization of tightness, that in 
presence of the compact containment 
condition~\ref{ass:compact-set} the sequence $(X_n)$ is tight 
if and only if for every compact set $K$, the sequence 
$(X_n)$ stopped upon its first exit of $K$ is tight.  Thus we 
have the following simple extension of the previous 
theorem.  

\begin{corollary} \label{cor:K}
For $K\subset\X$ let $T_n^K=\inf\{t\geq 0:X_n(t)\not \in K\}$.  Assume that the 
compact containment condition~\ref{ass:compact-set} 
holds and that:  
\begin{enumerate}[label=\textnormal{A2')}, 
ref=\textnormal{A2'}] 
\item\label{ass:oscillations-2} For every compact subset 
$K\subset\X$, there exist stochastic processes $F_n,F\in\Vcal$ such 
that $\sigma (F_n)\subset\Fcal_0^n$ and $F_n\Rightarrow F$ and $\beta 
>0$ such that for 
every $n\geq 1$ and every $0\leq s\leq t$, 
\begin{equation}\label{eq:oscillations-2}\E\left[1\wedge d\left(X_
n(t\wedge T^K_n),X_n(s\wedge T^K_n)\right)^{\beta}\mid\Fcal^n_s\right
]\leq F_n(t)-F_n(s).\end{equation}
\end{enumerate}

Then the sequence $(X_n,n\geq 1)$ is tight in $D_{\X}$.
\end{corollary}

We finally mention a second direct extension which is 
useful for the study of Galton--Watson processes in 
varying environments, see below.  

\begin{corollary}\ 
\label{cor:BS} Assume that the compact containment 
condition~\ref{ass:compact-set} holds, and that:  
\begin{enumerate}[label=\textnormal{A2'')}, 
ref=\textnormal{A2''}] 
\item There exist stochastic processes $F_n,F\in\Vcal$ such 
that $\sigma (F_n)\subset\Fcal_0^n$ and $F_n\Rightarrow F$ and $
\beta ,\eta >0$ 
such that for every $n\geq 1$ and every $0\leq s\leq t$ such 
that $F_n(t)-F_n(s)\leq\eta$, 

\begin{equation}\label{eq:oscillations-''}\E\left[1\wedge d\left(
X_n(t),X_n(s)\right)^{\beta}\mid\Fcal^n_s\right]\leq F_n(t)-F_n(s
).\end{equation}
\end{enumerate}

Then the sequence $(X_n,n\geq 1)$ is tight in $D_{\X}$.
\end{corollary}

\begin{proof}
Let $\tilde {F}_n(t)=F_n(t)/1\wedge\eta$: then the inequality

\[\E\left[1\wedge d\left(X_n(t),X_n(s)\right)^{\beta}\mid\Fcal^n_
s\right]\leq\tilde {F}_n(t)-\tilde {F}_n(s)\]
holds for every $0\leq s\leq t$.  Indeed, if $F_n(t)-F_n(s)\leq\eta$ then 
this follows from~\eqref{eq:oscillations-''} by dividing by 
$1\wedge\eta\leq 1$, while if $F_n(t)-F_n(s)\geq\eta$ then $\tilde {
F}_n(t)-\tilde {F}_n(s)\geq 1$ 
and the inequality is trivially satisfied.  Thus we can 
invoke Theorem~\ref{thm} to conclude.  
\end{proof}

\subsection*{Discussion}
If $F$ were continuous, then the 
result would follow immediately from Theorem~$3.8.6$
of~\cite{Ethier86:0} (see also Theorem~$4.20$ 
of~\cite{Kurtz75:0}), but, of course, the point of 
Theorem~$1$ of the paper is that $F$ is not continuous.
Allowing $F$ to be discontinuous is motivated 
by the study of processes in varying environment, 
where, typically, non-critical environments can create 
fixed times of discontinuity which translate to 
discontinuities of $F$.  When there are only finitely many 
fixed times of discontinuity, one can prove tightness on 
time-intervals without fixed times of discontinuity and 
then ``glue'' the pieces together (using for instance 
Lemma~$2.2$ in Whitt~\cite{Whitt80:0}).  However, this 
approach seems more challenging when fixed times of 
discontinuity can accumulate, and even be dense.  The 
interest of Theorem~\ref{thm} is to allow for such 
cases, and we now provide further motivation to study 
this case.

\section{Proof of Theorem~\ref{thm}}

\subsection*{First step}

We start with some preliminary remarks and the 
introduction of some auxiliary functions $\gamma_n$, $Y_n$ and $G_
n$.  
First, note that we can assume without loss of 
generality that $F_n$ and $F$ satisfy the following three 
properties:  
\begin{enumerate}[label={\roman*})] 
\item$F_n(t)-F_n(s),F(t)-F(s)\geq t-s$ for any $0\leq s\leq t$; 
\item$F_n(0)=F(0)=0$; 
\item$F_n$ and $F$ are unbounded.  
\end{enumerate}

Indeed, otherwise we can simply replace $F_n$ and $F$ by 
$\tilde {F}_n(t)=F_n(t)-F_n(0)+t$ and $\tilde {F}(t)=F(t)-F(0)+t$, so that 
$F_n(t)-F_n(s)=\tilde {F}_n(t)-\tilde {F}_n(s)-(t-s)\leq\tilde {F}_
n(t)-\tilde {F}_n(s)$ and 
assumption~\ref{ass:oscillations} still holds with $\tilde {F}_n$ in 
place of $F_n$.  In particular, $F_n$ and $F$ are strictly 
increasing and unbounded.\\

In the sequel, we therefore assume that $F_n$ satisfies 
these three properties.  For $f\in\Vcal$ and unbounded we 
define $f^{-1}\in\Vcal$ the function defined by 
$f^{-1}(t)=\inf\{s\geq 0:f(s)>t\}$.  We will consider in particular 
$\gamma_n=F_n^{-1}$, which satisfies the following properties (see 
Section~$13.6$ in Whitt~\cite{Whitt02:0}):  
\begin{enumerate}[label={\roman*})] 
\item$\gamma_n(0)=0$ and $\gamma_n$ is Lipschitz continuous and 
unbounded; 
\item$\gamma_n^{-1}=F_n$ and $\gamma_n\circ\gamma_n^{-1}=\text{Id}$, with $\text{
Id}$ 
the identity function $\text{Id}(t)=t$;
\item$\gamma_n(t)$ is $\Fcal_0^n$-measurable and hence is a $\{\Fcal_t^n\}$-stopping time.
\end{enumerate}

Note in particular, as a consequence of the Lipschitz 
continuity, that $(\gamma_n)$ is relatively compact.  We further 
define 
\begin{equation}\label{eq:def-Y-G}Y_n(t)=\lim_{s\to t+}X_n\left(\gamma_
n(s)-\right)\ \text{and}\ G_n(t)=\lim_{s\to t+}F_n\left(\gamma_n(
s)-\right).\end{equation}
These strange definitions are used so that we can apply 
Lemma 2.5 of \cite{Kurtz91:1}.  Note that since $F_n$ is 
strictly increasing, it follows that $X_n(t)=Y_n(F_n(t))$.

The compact containment condition for $(Y_n)$ simply 
follows from the identity 
\[\P\left(Y_n(t)\in K,\forall t\leq T\right)=\P\left(X_n(t)\in K,
\forall t\leq\gamma_n(T)\right)\]
together with the facts that the sequence $(\gamma_n(T))$ is 
bounded and that $(X_n)$ satisfies by assumption the 
compact containment condition~\ref{ass:compact-set}.

\subsection*{Second step}

We now prove that the sequence $(Y_n)$ is tight.  Let in 
the sequel $q(x,y)=1\wedge d(x,y)$.  
Note that since $\gamma_n(t)$ is $\Fcal_0^n$-measurable for any $t \geq 0$,~\eqref{eq:oscillations} 
implies that for $0<s<t$ and $0<\delta < \gamma_n(s)$,
\[\E\left[q(X_n(\gamma_n(t)-\delta ),X_n(\gamma_n(s)-\delta ))^{\beta}
|\Fcal_{\gamma_n(s)-\delta}^n\right]\leq F_n(\gamma_n(t)-\delta )
-F_n(\gamma_n(s)-\delta ),\]
and letting $\delta\rightarrow 0$ 
\begin{equation}\E\left[q(X_n(\gamma_n(t)-),X_n(\gamma_n(s)-))^{\beta}
|\Fcal_{\gamma_n(s)-}^n\right]\leq F_n(\gamma_n(t)-)-F_n(\gamma_n
(s)-).\label{predineq}\end{equation}
Again, we are using that for each $t$, $\gamma_n(t)$ is a 
predictable stopping time.  Define $\mathcal{G}_t^n=\cap_{s>t}\Fcal_{
\gamma_n(s)-}^n$. 
Taking decreasing limits in~\eqref{predineq}, we have
\[\E\left[q\left(Y_n(t),Y_n(s)\right)^{\beta}\mid \mathcal{G}^n_s\right
]\leq G_n(t)-G_n(s)\]
which implies in particular that 
$q(Y_n(t),Y_n(s))\leq\indicator{G_n(t)>G_n(s)}$.  Thus for any 
$0\leq v\leq t$ we have 
\[\E\left[q(Y_n(t+u),Y_n(t))^{\beta}\mid\mathcal{G}_t^n\right]q(Y_
n(t),Y_n(t-v))^{\beta}\leq\left(G_n(t+u)-G_n(t)\right)\indicator{
G_n(t)>G_n(t-v)}.\]
Next, Lemma~$2.5$ in Kurtz~\cite{Kurtz91:1} implies that 
$G_n(t)\leq t$ and that if $G_n(t)>G_n(t-v)$, then $G_n(t)>t-v$:  
therefore, 
\[\E\left[q(Y_n(t+u),Y_n(t))^{\beta}\mid\Fcal^n_{\gamma_n(t)}\right
]q(Y_n(t),Y_n(t-v))^{\beta}\leq v+u,\]
where this inequality holds for any $n\geq 1$ and any 
$0\leq v\leq t$ and $u\geq 0$.  These arguments also imply that 
\[\E\left[q(Y_n(\delta ),Y_n(0))^{\beta}\right]\leq G_n(\delta )\leq
\delta ,\]
and these two inequalities imply the desired tightness of 
$(Y_n)$ by Theorem~$3.8.6$ in Ethier and 
Kurtz~\cite{Ethier86:0}, 
since $(Y_n)$ also satisfies the compact containment condition.  

\subsection*{Third step} Let us now conclude the proof 
and show that $(X_n)$ is tight.  Since $\gamma_n\circ\gamma_n^{-1}
=\text{Id}$ 
and $X_n$ is right continuous, $X_n=Y_n\circ F_n$.  Since 
$(Y_n)$ is tight, assume without loss of generality (by 
working along appropriate subsequences and using the
Skorohod representation theorem) that $Y_n\to Y$:  if $Y$ were 
constant (except for maybe one jump) on any interval 
$[u,v]$ on which $F^{-1}$ is constant, then Lemma $2.3$(b) in 
Kurtz~\cite{Kurtz91:1} would imply that $X_n\to Y\circ F$ and 
$(X_n)$ would be tight.  Thus, for each interval $[u,v]$ on which 
$F^{-1}$ is constant, to conclude the proof it is 
enough to show that $Y$ is constant on $[u,v)$.  

Let $\alpha$ denote the constant value taken by $F^{-1}$ on $[u,v
]$, 
and consider  a sequence $(\alpha_n)$ such that $\alpha_n\to\alpha$, 
$F_n(\alpha_n)\to F(\alpha )$ and $F_n(\alpha_n-)\to F(\alpha -)$.  Fix $
u',v'$ with 
$[u',v']\subset (u,v)$.  Since $F$ is constant on $[u,v]$ and takes the 
value $\alpha$, we have $F(\alpha -)\leq u<v\leq F(\alpha )$, and in particular, 
$F_n(\alpha_n-)<u'<v'<F_n(\alpha_n)$ for $n$ large enough.  For these 
$n$, $F_n^{-1}$ is constant on $[u',v']$ and since 
$Y_n(t)=\lim_{s\rightarrow t+}X_n(F^{-1}_n(s)-)$, this 
implies that $Y_n$ for $n$ large enough is constant on $[u',v']$.  
The convergence $Y_n\to Y$ in the Skorohod topology then 
implies that $Y$ is constant on any $[u^{\prime\prime},v^{\prime\prime}
]\subset (u',v')$.  Since 
$u'<v'$ were arbitrary in $[u,v]$, and since $Y$ is c\`adl\`ag, we 
obtain by letting $u^{\prime\prime}\downarrow u$ and $v^{\prime\prime}
\uparrow v$ that $Y$ is constant on 
$[u,v)$ as desired.

\section{Scaling limits of Galton-Watson processes in varying environment}

A Galton Watson branching process (GW process) is an 
integer-valued Markov chain $(Z(k),k\geq 0)$ governed by the 
recursion 
\begin{equation}\label{eq:GW}Z(k+1)=\sum_{i=1}^{Z(k)}\xi_{k,i}\end{equation}
where the $\xi_{k,i}$'s are i.i.d.  random variables having as 
common distribution the so-called \emph{offspring 
distribution}. See, for instance, Athreya and 
Ney~\cite{Athreya04:0} for a  general introduction, and 
the introduction in Bansaye and Simatos~\cite{Bansaye:0} 
for more references pertained to the following 
discussion.  

GW processes in random environments, where the 
sequence of offspring distributions is random, have been 
introduced by Smith and Wilkinson~\cite{Smith69:0} and 
have recently been intensively investigated.  So far, 
they have mostly been studied from the viewpoint of 
their long-time behavior and, as far as we know, their 
scaling limits have only been studied in the finite 
variance case.  This is in sharp contrast with the case 
of constant environment, where scaling limits have been 
exhaustively characterized  by 
Grimvall~\cite{Grimvall74:0}.  This observation was the 
starting point of our investigation in~\cite{Bansaye:0} of 
the scaling limits of GW processes in varying 
environments, where the offspring distribution may 
change from one generation to the next.  This 
corresponds to the quenched approach, where one fixes a 
realization of the sequence of offspring distributions and 
studies the behavior of 
the GW process in this (varying) environment.  \\

In particular, we use Corollary \ref{cor:BS} above in 
order to show in~\cite{Bansaye:0} that the sequence of 
GW processes in a varying environment $(X_n)$ considered is 
tight.  It relies on the domination of a characteristic 
triplet associated to the branching mechanism of $X_n$.  
More precisely, here the process may explode in finite 
time and $[0,\infty ]$ is endowed with the metric 
$d(x,y)=\lvert e^{-x}-e^{-y}\rvert$.  The Assumption A1 is 
automatically satisfied since $[0,\infty ]$ endowed with $d$ is 
compact.  To apply Corollary \ref{cor:BS}, we prove 
in~\cite{Bansaye:0} that  that for each $t\geq 0$, there exists 
$\Delta_t$ such that  for any $s\leq y_0\leq y\leq t$ with $\mu_n
(y_0,y]\leq\Delta_t/2$ 
and $x_0\in [0,\infty ]$, 
\[\E\left[d(x_0,X_n(y))^2\mid X_n(y_0)=x_0\right]\leq 2\Delta_t\mu_
n(y_0,y],\]
where $\mu_n$ is a positive finite measure linked to the 
characteristic triplet of the process $X_n$.  In this case 
Assumption A2'' is satisfied with $\eta =\Delta_t^2$, $F^n=2\Delta_
t\mu_n$ and 
$F=2\Delta_t\mu$.  \\

In this context and in a large population approximation, 
each non-critical offspring distribution (i.e.,  with mean 
not equal to one) induces a deterministic jump in the 
limit:  if $Z(k)$ is large,  then the law of large numbers 
gives, in view of~\eqref{eq:GW}, 
$Z(k+1)-Z(k)\approx\E(\xi_{k,1}-1)Z(k)$.  If the sequence of offspring 
distributions stems from the realization of a sequence of 
i.i.d.  offspring distributions that may be, with positive 
probability, non-critical, then we naturally end up in the 
limit with a time-inhomogeneous Markov process with 
accumulations of fixed times of discontinuity.  Note that 
the possible \emph{accumulations} of these 
discontinuities comes from the fact that, in the usual 
renormalization schemes, time is sped up.  
 
This phenomenon, illustrated on GW processes, is of 
course not unique to this class of processes.  From a 
high-level perspective, it suggests that in a varying 
environment mixing critical and non-critical 
environments, it is natural to expect in the limit 
time-inhomogeneous Markov processes with 
accumulations of fixed times of discontinuity.  For 
instance, the above discussion immediately applies to 
random walks with time-varying step distributions, a 
topic covered by Jacod and Shiryaev~\cite{Jacod03:0}.  It 
is also a very natural framework in population dynamic 
and evolution.  Indeed when considering scaling limits with 
time acceleration in a varying environment, fixed times 
of discontinuity accumulate as soon as instantaneous 
jumps at fixed times are recurrent in the original time 
scale.  In order to illustrate this point, we consider in 
Section~\ref{sec:application} an application of 
Theorem~\ref{thm} to study logistic birth and death 
processes, where the environment provokes catastrophes.

\section{Tightness of logistic branching processes with catastrophes} \label{sec:application}

To further motivate our conditions for tightness, we 
show how to apply the results to the scaling limits of 
logistic branching processes with catastrophes.  

\subsection*{A logistic branching process}  Consider the 
following birth-and-death process:  
\begin{equation}\label{eq:transition-rates}
z\in \{0,1,2,\ldots \}
\longrightarrow\begin{cases}
	z - 1 & \text{ at rate } d z + c z^2,\\
	z + 1 & \text{ at rate } b z,
\end{cases}\end{equation}
for some parameters $b,c,d>0$:  $b$ is the per-individual 
birth rate, $d$ is the per-individual death rate and $c>0$ is 
a logistic term which represents competition between 
individuals.  This process is an example of 
population-dependent branching processes and is also a 
special case of logistic branching processes.  It plays a 
very important role in population dynamics, where it is 
probably the simplest model exhibiting a quasi-stationary 
regime.  Simply put, under a suitable scaling, the 
population size tends to stabilize for a very long time 
around the value $z^{*}=(b-d)/c$ that equalizes the birth 
and death rates.  

Its scaling limits are well-known, namely, if $Z_n$ is the 
above death-and-birth process with parameters $b=\lambda +n\gamma$, 
$d=\mu +n\gamma$ and $c=\kappa /n$, then the renormalized process 
$X_n(t)=Z_n(t)/n$ converges weakly to the logistic Feller 
diffusion, i.e.,  the unique solution to the following 
stochastic differential equation:  
\[\d {X}(t)=\left(\lambda -\mu -\kappa X(t)\right)X(t)\d {t}+\sqrt {
\gamma X(t)}\d {B}(t),\]
with $B$ a standard Brownian motion. See, for 
instance,~\cite{Lambert05:0}.  

\subsection*{A logistic branching process with catastrophes}

There are many different ways to add ``catastrophes'' to 
this logistic branching process.  For example, a common way 
is for the catastrophes to occur
 at the epochs of an independent Poisson 
process, and for each individual to toss a coin and die with a 
certain probability.  However, we adopt a slightly 
different framework, technically more convenient and 
which fulfills our purpose of illustrating the use of 
Theorem~\ref{thm} on a non-trivial example.  Our 
framework comes from the equivalent description of the 
Markov process with transition 
rates~\eqref{eq:transition-rates} via a stochastic 
differential equation,  namely, the unique solution to the 
stochastic differential equation 
\[Z(t)=Z(0)+\int_0^t\int_0^{\infty}\left(\indicator{u\leq bZ(s-)}
-\indicator{bZ(s-)<u\leq (b+d+cZ(s-))Z(s-)}\right)Q(\d {s},\d {u}
),\ t\geq 0,\]
where $Q$ is a Poisson point measure on $[0,\infty )^2$ with 
intensity $\d {s}\times\d {u}$.  A simple generalization 
to this dynamic is given by
\begin{eqnarray}
\label{eq:LBPWC}&&\quad Z(t)=Z(0)+\int_0^t\int_0^{\infty}\left(\indicator{
u\leq bZ(s-)}-\indicator{bZ(s-)<u\leq (b+d+cZ(s-))Z(s-)}\right)Q(
ds,du)\\
&&\qquad\qquad\qquad\qquad -\int_0^t\int_0^1(1-\theta )Z(s-)q(ds,
d\theta )\nonumber\end{eqnarray}
where $q$ is a deterministic point measure on $[0,\infty )\times 
[0,1]$ 
satisfying $q(\{t\}\times [0,1])\in \{0,1\}$ for every $t\geq 0$.  With the 
additional integral term $\int_0^t\int_0^1\left(1-\theta\right)Z(
s-)q(\d {s},\d{\theta })$, if $(t,\theta )$ is 
an atom of $q$, then $Z$ undergoes a catastrophe at time $t$ 
and loses a fraction $\theta\in [0,1]$ of its population.  Note that 
$Z$ given by~\eqref{eq:LBPWC} is no longer integer-valued, 
but this definition will be convenient in order to 
illustrate the use of Theorem~\ref{thm}.  

In the literature, catastrophes are usually added at 
random times, say at the instant of a Poisson process.  
In this case, $q$ would be a Poisson point measure 
independent of $Q$, with intensity $\d {s}\times\P (F\in\d{\theta }
)$ for some 
random variable $F\in [0,1]$:  the above formulation would 
then correspond to the \emph{quenched} approach, 
working conditionally on the random environment.  Let 
us finally mention that this example could be generalized 
in a number of ways, for instance by considering 
positive jumps at fixed times of discontinuity or 
multiple simultaneous births, but here we restrict 
ourselves to the simplest non-trivial example where we 
believe that Theorem~\ref{thm} is useful.  \\

We now consider the same scaling as previously, and we 
write now the birth and death rates for the scaled 
population:  
\begin{equation}\label{eq:scaling}b_n(x)=(\lambda +n\gamma )nx,\quad 
d_n(x)=\left(\mu +n\gamma +\frac {\kappa}nnx\right)nx.
\end{equation}
For each $n\geq 1$, we also consider a measure $q_n$ with 
$q_n(\{t\}\times [0,1])\in \{0,1\}$, 
and we consider $Z_n$ the solution to~\eqref{eq:LBPWC} with these parameters and with initial condition $Z_n(0)=x_0n$ for some $x_0\geq 0$. 
We finally consider the renormalized process
\[ X_n(t) = \frac{Z_n(t)}{n}, \ t \geq 0, \]
which satisfies the following stochastic differential equation:
\begin{eqnarray}
\label{eq:SDE-X}&&\qquad X_n(t)=x_0-\int_0^t\int_0^1\big(1-\theta\big
)X_n(s-)q_n(\d {s},\d{\theta })\\
&&\qquad\quad +\int_0^t\int_0^{\infty}\frac 1n\left(\indicator{u\leq b_
n(X_n(s-))}-\indicator{b_n(X_n(s-))<u\leq d_n(X_n(s-))}\right)Q(\d {
s},\d {u}).\nonumber\end{eqnarray}

Let in the sequel
\[ f_n(t) = \int_0^t \int_0^1 (1-\theta) q_n(\d s, \d \theta) \ \text{ and } \ F_n(t) = t + f_n(t). \]

\begin{Lem} \label{lemma:LBPWC}
For $K\geq 0$, let $T_n^K=\inf\{t\geq 0:X_n(t)\geq K\}$. For any $
T\geq 0$,
\begin{equation}\label{eq:compact-containment-LBPWC}\lim_{K\to\infty}\limsup_{
n\to\infty}\ \P\left(T_n^K\leq T\right)=0,\end{equation}
and for each $K\geq 0$, there exists a constant $C_K$ such that 
the inequality 
\begin{equation}\label{eq:oscillations-LBPWC}\E\left[1\wedge\left
(X_n(t\wedge T_n^K)-X_n(s\wedge T_n^K)\right)^2\mid\Fcal_n(s)\right
]\leq C_K\big(F_n(t)-F_n(s)\big)\end{equation}
holds for all $n\geq 1$ and $0\leq s\leq t$.
\end{Lem}

Assuming that $F_n\to F$ (which holds for instance if $q_n$ 
converges weakly to some measure $q$), this result gives 
the tightness of the sequence $(X_n)$, since the assumptions 
of Corollary~\ref{cor:K} are then satisfied.  Note that $F_n$ 
and its limit $F$ may be discontinuous, and the upper 
bound in~\eqref{eq:oscillations-LBPWC} depends on the 
constant $K$ considered.  Also, it is reasonable when $q_n\to q$ 
to expect any accumulation point to satisfy the following 
stochastic differential equation 
\[\d {X}(t)=\left(\lambda -\mu -cX(t)\right)X(t)\d {t}+\sqrt {\gamma 
X(t)}\d {B}(t)-\int_0^t\int_0^1\big(1-\theta\big)X(s-)q(\d {s},\d{
\theta }).\]

\begin{proof} [Proof of Lemma~\ref{lemma:LBPWC}]
The fact that $(X_n)$ 
satisfies the compact containment 
condition~\eqref{eq:compact-containment-LBPWC} follows 
from a comparison argument:  from~\eqref{eq:scaling} 
and~\eqref{eq:SDE-X} it follows that 
\[X_n(t)\leq X_n(0)+\int_0^t\int_0^{\infty}\frac 1n\left(\indicator{
u\leq nbX_n(s-)}-\indicator{nbX_n(s-)<u\leq n(b+d)X_n(s-)}\right)
Q(\d {s},\d {u})\]
and so classical comparison arguments for stochastic 
differential equations (see for 
instance~\cite[Theorem~V.$43.1$]{Rogers87:0}) imply that 
$X_n(t)\leq\tilde {X}_n(t)$ with $\tilde {X}_n$ given by 
\[\tilde {X}_n(t)=\tilde {X}_n(0)+\int_0^t\int_0^{\infty}\frac 1n\left
(\indicator{u\leq nb\tilde X_n(s-)}-\indicator{nb\tilde X_n(s-)<u
\leq n(b+d)\tilde X_n(s-)}\right)Q(\d {s},\d {u})\]
with $\tilde {X}_n(0)=\lceil nX_n(0)\rceil /n$.  One readily checks that $
\tilde {X}_n$ 
is a linear birth and death process (scaled in time and 
space), whose compact containment condition is easily 
proved (actually, it is well-known that $(\tilde {X}_n)$ converges 
weakly to the Feller diffusion).  We now turn to 
the proof of~\eqref{eq:oscillations-LBPWC}.  The process 
$((X_n(t),t),t\geq 0)$ is Markov with generator 
\begin{eqnarray*}	 
		\Omega_n(f)(x, t) &= & \frac{\partial f}{\partial t}(x, t) + \left( f\left( x + \frac{1}{n}, t \right) - f(x, t) \right) b_n(x) 
		+ \left( f\left( x - \frac{1}{n}, t \right) - f(x, t) \right) d_n(x)\\
		&& \hspace{10mm} + \int q_n(\{t\} \times \d \theta) \left( f(\theta x, t) - f(x, t) \right)
\end{eqnarray*}
and so the stopped process $((X_n(t\wedge T^K_n),t\wedge T^K_n),t\geq 0)$ 
is Markov and its generator is given by $\Omega_n(f)(x,t)\indicator{x\leq K}$. 
In particular, for a function $f$ that only depends on $x$,
 defining $X^K_n(t)=X_n(t\wedge T^K_n)$,
\begin{eqnarray*}
\E\left[f(X^K_n(t))\right]&&=f(X^K_n(0))\\
&&\quad +\int_0^t\E\left[\left(f\left(X^K_n(s)+\frac 1n\right)-f(
X^K_n(s))\right)b_n(X^K_n(s))\ind{T^K_n>s}\right]\d {s}\\
&&\quad +\int_0^t\E\left[\left(f\left(X^K_n(s)-\frac 1n\right)-f(
X^K_n(s))\right)d_n(X^K_n(s))\ind{T^K_n>s}\right]\d {s}\\
&&\quad +\int_0^t\int q_n(\d {s}\times\d{\theta })\E\left[\left(f
(\theta X^K_n(s))-f(X^K_n(s))\right)\ind{T^K_n>s}\right].\end{eqnarray*}
For $f(x)=(x-X^K_n(0))^2$, we find after some computation
\begin{eqnarray*}
&&\E\left[\left(X^K_n(t)-X^K_n(0)\right)^2\mid X^K_n(0)\right]\\
&&\qquad =2(\lambda -\mu )\int_0^t\E\left[X^K_n(s)(X^K_n(s)-X^K_n
(0))\ind{T^K_n>s}\right]\d {s}\\
&&\qquad\qquad +\frac {\lambda +\mu +2\gamma n}n\int_0^t\E\left(X^
K_n(s)\ind{T^K_n>s}\right)\d {s}\\
&&\qquad\qquad +\frac cn\int_0^t\E\left(X^K_n(s)^2\ind{T^K_n>s}\right
)\d {s}\\
&&\qquad\qquad -2c\int_0^t\E\left[X^K_n(s)^2(X^K_n(s)-X^K_n(0))\ind{
T^K_n>s}\right]\d {s}\\
&&\qquad\qquad -\int_0^t\int q_n(\d {s}\times\d{\theta })(1-\theta^
2)\E\left[X^K_n(s)^2\ind{T^K_n>s}\right]\\
&&\qquad\qquad +\int_0^t\int q_n(\d {s}\times\d{\theta })2(1-\theta 
)\E\left[X^K_n(s)X^K_n(0)\ind{T^K_n>s}\right].\end{eqnarray*}
Since
$f(X_n(t))=0$ for $X_n(0)>K$, we can assume that $X_n(0)\leq K$ 
and we get 
\begin{eqnarray*}
\E\left[\left(X^K_n(t)-X^K_n(0)\right)^2\mid X^K_n(0)\right]\leq 
4\lvert\lambda -\mu\rvert K^2t+\frac {(\lambda +\mu +2\gamma n)Kt}
n+\frac cnK^2t+2cK^3t\\
+2K^2\int_0^t\int (1-\theta )q_n(\d {s}\times\d{\theta }).\end{eqnarray*}

Since all sequences involved  are bounded, the result follows.
\end{proof}

\end{document}